\definecolor{mycolor}{HTML}{F7F8E0}
\definecolor{myorange}{RGB}{245,156,74}
\definecolor{cadetgrey}{rgb}{0.57, 0.64, 0.69}
\definecolor{calpolypomonagreen}{rgb}{0.12, 0.3, 0.17}
\newcommand\cyr{%
\renewcommand\rmdefault{wncyr}%
\renewcommand\sfdefault{wncyss}%
\renewcommand\encodingdefault{OT2}%
\normalfont
\selectfont}
\DeclareTextFontCommand{\textcyr}{\cyr}
\numberwithin{equation}{section}
\newtheorem{thm}{Theorem}[section]
\newtheorem{cor}[thm]{Corollary}
\newtheorem{lem}[thm]{Lemma}
\newtheorem{prop}[thm]{Proposition}
\theoremstyle{definition}
\begin{document}

\title[Fitting ideals of Selmer groups]{On the Fitting ideals of anticyclotomic Selmer groups of elliptic curves with good ordinary reduction}
\author{Chan-Ho Kim}
\address{
Department of Mathematics and Institute of Pure and Applied Mathematics,
Jeonbuk National University,
567 Baekje-daero, Deokjin-gu, Jeonju, Jeollabuk-do 54896, Republic of Korea
}
\email{chanho.math@gmail.com}
\thanks{Chan-Ho Kim was partially supported 
by a KIAS Individual Grant (SP054103) via the Center for Mathematical Challenges at Korea Institute for Advanced Study,
by the National Research Foundation of Korea(NRF) grant funded by the Korea government(MSIT) (No. 2018R1C1B6007009, 2019R1A6A1A11051177), 
by research funds for newly appointed professors of Jeonbuk National University in 2024, and
by Global-Learning \& Academic research institution for Master’s$\cdot$Ph.D. Students, and Postdocs (LAMP) Program of the National Research Foundation of Korea (NRF) funded by the Ministry of Education (No. RS-2024-00443714).
}
\date{\today}
\subjclass[2010]{11F67, 11G40, 11R23}
\keywords{refined Iwasawa theory, anticyclotomic Iwasawa theory}
\begin{abstract}
We give a short proof of the anticyclotomic analogue of the ``strong" main conjecture of Kurihara on Fitting ideals of Selmer groups for elliptic curves with good ordinary reduction  under mild hypotheses.
More precisely, we completely determine the initial Fitting ideal of Selmer groups over finite subextensions of an imaginary quadratic field in its anticyclotomic $\mathbb{Z}_p$-extension in terms of Bertolini--Darmon's theta elements.
\end{abstract}
\maketitle

\setcounter{tocdepth}{1}
\section{Introduction}
\subsection{The statement of the main result}
Let $E$ be an elliptic curve of conductor $N$ over $\mathbb{Q}$ and $p \geq 5$ be a prime of good ordinary reduction for $E$ such that
\begin{itemize}
\item[(Im)]  the mod $p$ Galois representation
$\overline{\rho} : G_{\mathbb{Q}} = \mathrm{Gal}(\overline{\mathbb{Q}}/\mathbb{Q}) \to \mathrm{Aut}_{\mathbb{F}_p}(E[p]) \simeq \mathrm{GL}_2(\mathbb{F}_p)$ is surjective, and
\item[(Ram)] $\overline{\rho}$ is ramified at every prime dividing $N$, so $p$ does not divide Tamagawa factors of $E$.
\end{itemize}
Let $K$ be an imaginary quadratic field of odd discriminant $-D_K  < -4$ with $(D_K, Np) = 1$ such that
\begin{itemize}
\item[(Spl)] $p$ splits in $K$, and
\item[(Na)] $a_p(E) \not\equiv 1 \pmod{p}$.
\end{itemize}
Write 
$$N = N^+ \cdot N^-$$
 where a prime divisor of $N^+$ splits in $K$ and a prime divisor of $N^- $ is inert in $K$.
\begin{itemize}
\item[(Def)] Assume that $N^-$ is a square-free product of an odd number of primes.
\end{itemize}
Let $K_\infty$ be the anticyclotomic $\mathbb{Z}_p$-extension of $K$ and $K_n$ be the subextension of $K$ in $K_\infty$ of degree $p^n$ for $n \geq 0$.
Let $\Lambda_n = \mathbb{Z}_p[\mathrm{Gal}(K_n/K)] \simeq \mathbb{Z}_p[X]/( (1+X)^{p^n} - 1 )$ be the finite layer Iwasawa algebra.
Under (Def), denote by
$$\theta(E/K_n) = \sum_{\sigma \in \mathrm{Gal}(K_n/K)} a_{\sigma} \cdot \sigma \in \Lambda_n$$
 Bertolini--Darmon's theta element of $E$ over $K_n$ which interpolates the square-roots of $L(E/K, \chi, 1)$ for finite order characters $\chi$ on $\mathrm{Gal}(K_n/K)$. It is reviewed in $\S$\ref{sec:bertolini-darmon-elements}.
For the natural projection map $\pi_{n, n-1} : \Lambda_{n} \to \Lambda_{n-1}$, let 
$\nu_{n-1, n} : \Lambda_{n-1} \to \Lambda_n$ be the map defined by $\sigma \mapsto \sum_{\pi_{n, n-1}(\tau) = \sigma} \tau$. For $0 \leq m \leq n$, we write
 $\nu_{m, n} = \nu_{n-1, n} \circ \nu_{n-2, n-1} \circ \cdots \circ \nu_{m, m+1} $.
Then we have the equality of ideals of $\Lambda_n$ (Lemma \ref{lem:reduction-generators})
 $$\left( \theta(E/K_n) , \nu_{n-1, n} \left( \theta(E/K_{n-1}) \right) \right) = \left( \nu_{m, n} \left( \theta(E/K_{m}) \right) : 0 \leq m \leq n \right) ,$$
and it is a principal ideal under (Na) (Lemma \ref{lem:p-stabilization}).
  
The goal of this article is to prove the following anticyclotomic analogue of the ``strong" main conjecture of Kurihara \cite[Conj. 0.3]{kurihara-invent}, which refines the ``weak" main conjecture of Mazur and Tate \cite[Conj. 3]{mazur-tate}.
\begin{thm} \label{thm:main}
Under the assumptions mentioned above, i.e. (Im),(Ram),(Spl),(Na), and (Def), 
the theta elements over $K_m$ with $0 \leq m \leq n$ generate the initial Fitting ideal of dual Selmer groups over $K_n$, i.e.
we have equality of ideals of $\Lambda_n$
$$\left( \theta(E/K_n) , \nu_{n-1, n} \left( \theta(E/K_{n-1}) \right) \right)^2  = \mathrm{Fitt}_{\Lambda_n} \left( \mathrm{Sel}(K_n, E[p^\infty])^\vee \right) ,$$
which is indeed a principal ideal, where $\mathrm{Sel}(K_n, E[p^\infty])$ is the classical Selmer group of $E[p^\infty]$ over $K_n$ and $(-)^\vee$ means the Pontryagin dual.
\end{thm}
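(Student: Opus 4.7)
The plan is to deduce the claimed identification of Fitting ideals at finite level from the corresponding infinite-level anticyclotomic Iwasawa main conjecture, combined with a cyclicity statement for the dual Iwasawa Selmer group and Mazur's control theorem. Throughout, write $\Lambda_\infty = \varprojlim_n \Lambda_n = \mathbb{Z}_p\llbracket\mathrm{Gal}(K_\infty/K)\rrbracket$ and $X_\infty = \mathrm{Sel}(K_\infty, E[p^\infty])^\vee$; the goal at finite level is then to compute $\mathrm{Fitt}_{\Lambda_n}(X_n)$ with $X_n := \mathrm{Sel}(K_n, E[p^\infty])^\vee$.

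First, by Lemma~\ref{lem:p-stabilization}, the hypothesis (Na) ensures that the ideal $\bigl(\theta_n(E/K),\, \nu_{n-1,n}\theta_{n-1}(E/K)\bigr)$ is principal, generated by a $p$-stabilised theta element $L_n \in \Lambda_n$, and Lemma~\ref{lem:reduction-generators} makes the family $\{L_n\}$ compatible under the projections $\pi_{n,n-1}$, so it assembles into a single element $L_\infty \in \Lambda_\infty$ whose image in each $\Lambda_n$ is $L_n$. The desired equality thus becomes $(L_n)^2 = \mathrm{Fitt}_{\Lambda_n}(X_n)$.

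Next, I would invoke the anticyclotomic Iwasawa main conjecture in the definite quaternionic setting: under (Im), (Ram), (Spl), (Na), (Def), one has the equality of ideals of $\Lambda_\infty$
$$\mathrm{char}_{\Lambda_\infty}(X_\infty) \;=\; (L_\infty^2),$$
obtained from the combined work of Bertolini--Darmon, Pollack--Weston, Skinner--Urban, Wan, and Burungale--Castella--Kim. To upgrade this from characteristic ideals to Fitting ideals, I would then establish that $X_\infty$ is \emph{cyclic} as a $\Lambda_\infty$-module and contains no non-trivial pseudo-null submodule, using the absolute irreducibility of $\overline{\rho}|_{G_K}$ furnished by (Im) together with the vanishing of Tamagawa obstructions from (Ram). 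Combined with the main conjecture, this forces $X_\infty \simeq \Lambda_\infty/(L_\infty^2)$, hence $\mathrm{Fitt}_{\Lambda_\infty}(X_\infty) = (L_\infty^2)$.

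Finally, Fitting ideals commute with arbitrary base change, so $\mathrm{Fitt}_{\Lambda_n}(X_\infty \otimes_{\Lambda_\infty} \Lambda_n) = (L_n^2)$, and Mazur's control theorem, whose kernel and cokernel terms vanish under (Ram) and (Na), identifies $X_\infty \otimes_{\Lambda_\infty} \Lambda_n$ with $X_n$ as $\Lambda_n$-modules, completing the proof. The principal obstacle is the cyclicity step: a direct Nakayama reduction does not automatically produce a single generator, so one may need to feed in a finer input --- for instance a Kolyvagin-type upper bound on $\mathrm{Sel}(K, E[p])$, or a Bertolini--Darmon level-raising computation producing enough explicit classes in $X_\infty$ --- to pin down the $\Lambda_\infty$-module structure tightly enough to conclude cyclicity and thereby deliver the short proof advertised.
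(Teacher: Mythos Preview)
Your overall architecture --- infinite-level main conjecture, descent via base-change of Fitting ideals and the control theorem, then Lemma~\ref{lem:p-stabilization} --- is exactly the paper's. The divergence, and the gap, is in how you pass from $\mathrm{char}_{\Lambda}(X_\infty)$ to $\mathrm{Fitt}_{\Lambda}(X_\infty)$. You propose to do this by proving $X_\infty$ is \emph{cyclic}, and you rightly flag this as the obstacle. But cyclicity is neither needed nor available here: a module such as $\Lambda/(f)\oplus\Lambda/(g)$ already has $\mathrm{Fitt}=\mathrm{char}=(fg)$ without being cyclic, and nothing in (Im), (Ram), (Spl), (Na), (Def) forces $X_\infty$ to be one-generated. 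The Kolyvagin- or level-raising-type inputs you suggest would amount to proving something strictly stronger than the theorem, so as written the argument does not close.

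The paper bypasses this entirely. Over the two-dimensional regular local ring $\Lambda$, a finitely generated torsion module $M$ satisfies $\mathrm{Fitt}_\Lambda(M)=\mathrm{char}_\Lambda(M)$ as soon as $M$ has no non-zero finite submodule, since then $M$ has projective dimension $\leq 1$ and hence a square presentation whose determinant computes both ideals. That $X_\infty$ has no non-zero finite submodule is precisely the dual of Greenberg's result that $\mathrm{Sel}(K_\infty,E[p^\infty])$ has no proper $\Lambda$-submodule of finite index (Proposition~\ref{prop:no-finite}, Corollary~\ref{cor:no-finite}), which holds under (Im) once cotorsionness is known. Replace your cyclicity step with this and the proof goes through. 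Two smaller points: the norm-compatibility assembling the $p$-stabilised elements into an element of $\Lambda$ is relation~(\ref{eqn:norm-compatibility}), not Lemma~\ref{lem:reduction-generators}; and $L_p(E/K_\infty)$ is $\theta\cdot\iota(\theta)$ rather than a literal square, with the functional equation~(\ref{eqn:f-e}) supplying the equality $(\theta_n(f_\alpha/K)\cdot\iota(\theta_n(f_\alpha/K)))=(\theta_n(f_\alpha/K))^2$ at the level of ideals.
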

Since theta elements interpolate the \emph{square-roots} of twisted Rankin--Selberg $L$-values, it is natural that the \emph{square} of the ideal generated by theta elements appears in the equality. 

The strategy of our proof follows that given in \cite{kim-kurihara} and we also add some details on the ``$p$-destabilization" process and on the comparison of various anticyclotomic Selmer groups of elliptic curves.

\subsection*{Acknowledgement}
I would like to thank Rob Pollack and Masato Kurihara very much.
Rob Pollack first suggested me to think about this problem when I was a PhD student and I was able to solve it after having the discussions with Masato Kurihara.
In particular, I learned the idea of the 
proof of Lemma \ref{lem:p-stabilization} on the $p$-destabilization process from the discussion with Masato Kurihara at his home, January 3, 2017.
The main result of this article partially refines that of \cite{kim-mazur-tate}.
We are grateful to thank the referee for pointing out inaccuracies and unrefined writings.
The explanation is greatly improved in the revised version following the referee's suggestions.

\section{Bertolini--Darmon's theta elements and anticyclotomic $p$-adic $L$-functions} \label{sec:bertolini-darmon-elements}
We quickly review the construction of Gross points of conductor $p^n$, theta elements, and anticyclotomic $p$-adic $L$-functions. 
See  \cite{chida-hsieh-main-conj, chida-hsieh-p-adic-L-functions, kim-overconvergent} for details.
\subsection{Gross points}
Let $K$ be the imaginary quadratic field of odd discriminant $-D_K < - 4$.
Define
$$\vartheta :=      \dfrac{ D_K - \sqrt{-D_K} }{ 2 }
$$
 so that
$\mathcal{O}_K = \mathbb{Z} + \mathbb{Z}\vartheta$.
Let $B_{N^-}$ be the definite quaternion algebra over $\mathbb{Q}$ of discriminant $N^-$.
Then there exists an embedding $\Psi :K \hookrightarrow B_{N^-}$ \cite{vigneras}.
More explicitly, we choose a $K$-basis $(1,J)$ of $B_{N^-}$ so that $B_{N^-} = K \oplus K \cdot J$ such that
$\beta := J^2 \in \mathbb{Q}^\times$ with $\beta <0$, $J \cdot t = \overline{t} \cdot J$ for all $t \in K$, $\beta \in \left( \mathbb{Z}^\times_q \right)^2$ for all $q \mid pN^+$, and
$\beta \in \mathbb{Z}^\times_q$ for all $q \mid D_K$.
Fix a square root $\sqrt{\beta} \in \overline{\mathbb{Q}}$ of $\beta$.
For a $\mathbb{Z}$-module $A$, write $\widehat{A} = A \otimes \widehat{\mathbb{Z}}$.
Fix an isomorphism
$$i := \prod i_q : \widehat{B}^{(N^-)}_{N^-} \simeq \mathrm{M}_2(\mathbb{A}^{(N^-\infty)})$$
as follows:
\begin{itemize}
\item For each finite place $q \mid N^+p$, the isomorphism
$i_q : B_{N^-,q} \simeq \mathrm{M}_2(\mathbb{Q}_q)$ is defined by
\[
\xymatrix{
{
i_q(\vartheta)  = \left( \begin{matrix}
\mathrm{trd}(\vartheta) & - \mathrm{nrd}(\vartheta) \\
1 & 0
\end{matrix} \right)  }, & 
{
i_q(J)  = \sqrt{\beta} \cdot \left( \begin{matrix}
-1 & \mathrm{trd}(\vartheta) \\
0 & 1
\end{matrix} \right) 
}
}
\]
where $\mathrm{trd}$ and $\mathrm{nrd}$ are the reduced trace and the reduced norm on $B$, respectively.
\item For each finite place $q \nmid pN^+$, the isomorphism
$i_q : B_{N^-,q} \simeq \mathrm{M}_2(\mathbb{Q}_q)$ is chosen so that
$i_q \left( \mathcal{O}_K \otimes \mathbb{Z}_q  \right) \subseteq \mathrm{M}_2(\mathbb{Z}_q) $.
\end{itemize}
Under the fixed isomorphism $i$, for any rational prime $q$, the local Gross point $\varsigma_q \in B^\times_{N^-,q}$ is defined as follows:
\begin{itemize}
\item $\varsigma_q := 1$
in $B^\times_{N^-,q}$ for $q \nmid pN^+$.
\item $\varsigma_q := \frac{1}{\sqrt{D_K}}\cdot \left( \begin{matrix}
\vartheta & \overline{\vartheta} \\
1 & 1
\end{matrix} \right) \in \mathrm{GL}_2(K_\mathfrak{q}) = \mathrm{GL}_2(\mathbb{Q}_q) $
for $q \mid N^+$ with $q = \mathfrak{q} \overline{\mathfrak{q}}$ in $\mathcal{O}_K$.
\item 
$\varsigma^{(n)}_p = \left( \begin{matrix}
\vartheta & -1 \\
1 & 0
\end{matrix} \right)
\cdot \left( \begin{matrix}
p^n & 0 \\
0 & 1
\end{matrix} \right)
 \in \mathrm{GL}_2(K_\mathfrak{p}) = \mathrm{GL}_2( \mathbb{Q}_{p} )$ where $p = \mathfrak{p}\overline{\mathfrak{p}}$ splits in $K$.
\end{itemize}
Let $\widehat{\Psi} : \widehat{K} \hookrightarrow\widehat{B}_{N^-}$ be the adelic version of $\Psi$.
We define
 $x_n : \widehat{K}^\times \to \widehat{B}^\times_{N^-}$
by
$x_n(a) = \widehat{\Psi}(a) \cdot \varsigma^{(n)} := \widehat{\Psi}(a) \cdot \left( \varsigma^{(n)}_p \times \prod_{q \neq p} \varsigma_q \right)$.
The collection $\left\lbrace x_n(a) : a \in \widehat{K}^\times \right\rbrace$ of points is called the \textbf{Gross points of conductor $p^n$ on $\widehat{B}^\times_{N^-}$}.
The fixed embedding $K \hookrightarrow B_{N^-}$ also induces an optimal embedding of $\mathcal{O}_n = \mathbb{Z}+p^n \mathcal{O}_K$ into the Eichler order
$B_{N^-} \cap \varsigma^{(n)}\widehat{R}_{N^+}(\varsigma^{(n)})^{-1}$ where $R_{N^+}$ is the Eichler order of level $N^+$ under the fixed isomorphism $i$.

\subsection{Theta elements}
Let $f(z) = \sum_{n \geq 1} a_n q^n \in S_{2}(\Gamma_0(N))$ be the cuspidal newform of weight two with rational Fourier coefficients corresponding to $E$ via the modularity theorem \cite{bcdt}. 
Let $\phi_f : B^\times_{N^-} \backslash \widehat{B}^{\times}_{N^-} / \widehat{R}^\times_{N^+} \to \mathbb{C}$ be the Jacquet--Langlands transfer of $f$.
Since $B^\times_{N^-} \backslash \widehat{B}^{\times}_{N^-} / \widehat{R}^\times_{N^+}$ is a finite set and $f$ is a Hecke eigenform, we are able to and do normalize
$$\phi_f : B^\times_{N^-} \backslash \widehat{B}^{\times}_{N^-} / \widehat{R}^\times_{N^+} \to \mathbb{Z}_p$$
such that the image of $\phi_f $ does not lie in $p\mathbb{Z}_p$. This integral normalization is related to the congruence ideals \cite{pw-mu, kim-summary,kim-ota}.
Let
$$\widetilde{\theta}_n(E/K) = \sum_{[a] \in \mathcal{G}_n} \phi_f (x_n(a) )  \cdot [a] \in \mathbb{Z}_p[\mathcal{G}_n]$$
where $\mathcal{G}_n = K^\times \backslash \widehat{K}^\times / \widehat{\mathcal{O}}^\times_n $ and $[a]$ is the image of $a \in \widehat{K}^\times$ in $\mathcal{G}_n$. Then
\textbf{Bertolini--Darmon's theta element $\theta(E/K_n)$ of $E$ over $K_n$} is defined by the image of $\widetilde{\theta}_n(E/K)$ in $\Lambda_n$
\[
\xymatrix@R=0em{
\mathbb{Z}_p[\mathcal{G}_n] \ar[r] & \Lambda_n = \mathbb{Z}_p[\mathrm{Gal}(K_n/K)]\\
\widetilde{\theta}_n(E/K) \ar@{|->}[r] & \theta(E/K_n) 
}
\]
where the map is naturally induced from the quotient map $\mathcal{G}_n \to \mathrm{Gal}(K_n/K)$.
It is known that $\theta(E/K_n)$ interpolates ``an half of" $L(E, \chi, 1)$ where $ \chi$ runs over characters on $\mathrm{Gal}(K_n/K)$. See \cite[Rem. (iii) after Thm. A]{chida-hsieh-p-adic-L-functions} for the precise meaning of ``an half of".
Because $\theta(E/K_n)$ depends on the choice of Gross points, 
 $\theta(E/K_n)$ is well-defined only up to multiplication by $\mathrm{Gal}(K_n/K)$.
\subsection{$p$-adic $L$-functions}
Let $\alpha , \beta$ be the roots of the Hecke polynomial $X^2 - a_pX + p$ of $f$ at $p$. 
Since $f$ is ordinary at $p$, one of them, say $\alpha$, is a $p$-adic unit.
 
The \textbf{$p$-stabilization $f_\alpha \in S_{2}(\Gamma_0(Np))$ of $f$} is defined by
$$f_\alpha (z) = f(z) - \beta \cdot f(pz) $$
whose $U_p$-eigenvalue is $\alpha$.
Then the \textbf{theta element of $f_\alpha$ over $K_n$} is characterized by the following relation:
\begin{equation} \label{eqn:p-stabilization}
\theta(f_\alpha/K_n) = \dfrac{1}{\alpha^n} \cdot \left(  \theta(E/K_n) - \dfrac{1}{\alpha} \cdot \nu_{n-1, n}( \theta(E/K_{n-1}) )  \right) .
\end{equation}
It is known that theta elements of $E$ satisfies the three term relation (e.g. \cite[Lem. 2.6]{darmon-iovita})
\begin{equation} \label{eqn:three-term-relation}
\pi_{n+1, n} \left( \theta(E/K_{n+1}) \right) = a_p \cdot \theta(E/K_n) - \nu_{n-1, n} \left( \theta(E/K_{n-1}) \right)
\end{equation}
and the theta elements of $f_\alpha$ satisfy the norm compatibility
\begin{equation} \label{eqn:norm-compatibility}
\pi_{n+1, n} \left( \theta(f_\alpha/K_{n+1}) \right)=  \theta(f_\alpha/K_{n}) .
\end{equation}
Let $\iota$ be the involution on $\Lambda_n$ defined by inverting group-like elements, so we have
 $$\iota( \sum_{\sigma \in \mathrm{Gal}(K_n/K)} a_{\sigma} \cdot \sigma ) = \sum_{\sigma \in \mathrm{Gal}(K_n/K)} a_{\sigma} \cdot \sigma^{-1}.$$
We define the \textbf{anticyclotomic $p$-adic $L$-function of $E$} by
$$L_p(E/K_\infty) =   \varprojlim_n\left(  \theta(f_\alpha/K_n) \cdot \iota( \theta(f_\alpha/K_n) ) \right) \in \Lambda = \varprojlim_n \Lambda_n.$$
This element is well-defined.
The functional equation for Bertolini--Darmon's theta elements yields the equality of ideals of $\Lambda$
(e.g. \cite[Prop. 2.13]{bertolini-darmon-mumford-tate-1996}, \cite[Lem. 1.5]{bertolini-darmon-imc-2005})
\begin{equation} \label{eqn:f-e}
\left( \theta(f_\alpha/K_n) \right) = \left( \iota( \theta(f_\alpha/K_n) ) \right).
\end{equation}
We prove two useful lemmas.
\begin{lem} \label{lem:reduction-generators}
We have an equality of ideals of $\Lambda_n$
$$\left( \theta(E/K_n) , \nu_{n-1, n} \left( \theta(E/K_{n-1}) \right) \right) = \left( \nu_{m, n} \left( \theta(E/K_{m}) \right) : 0 \leq m \leq n \right) .$$
\end{lem}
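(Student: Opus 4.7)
The plan is to establish both inclusions separately. The inclusion $\subseteq$ is immediate: $\theta_n(E/K) = \nu_{n,n}(\theta_n(E/K))$ appears on the right-hand side (taking $m = n$), and $\nu_{n-1,n}(\theta_{n-1}(E/K))$ appears (taking $m = n-1$).

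For the reverse inclusion, the strategy is descending induction on $m$, showing that $\nu_{m,n}(\theta_m(E/K))$ lies in $\left( \theta_n(E/K), \nu_{n-1,n}(\theta_{n-1}(E/K)) \right)$ for every $0 \leq m \leq n$. The base cases $m = n$ and $m = n-1$ are precisely the generators. For the inductive step, fix $m$ with $1 \leq m \leq n-1$ and assume the claim is known for $m$ and $m+1$. Rearranging the three-term relation \eqref{eqn:three-term-relation} at level $m$ gives
$$\nu_{m-1,m}(\theta_{m-1}(E/K)) = a_p \cdot \theta_m(E/K) - \pi_{m+1,m}(\theta_{m+1}(E/K)) \quad \text{in } \Lambda_m,$$
and applying $\nu_{m,n}$ to both sides, the left-hand side becomes $\nu_{m-1,n}(\theta_{m-1}(E/K))$, while the first term on the right becomes $a_p \cdot \nu_{m,n}(\theta_m(E/K))$, which lies in the target ideal by hypothesis.

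The remaining task, and the only technical point, is to control $\nu_{m,n}\bigl(\pi_{m+1,m}(\theta_{m+1}(E/K))\bigr)$. The key identity, immediate from the coset definition of $\nu$, is that for every $\lambda \in \Lambda_{m+1}$,
$$\nu_{m,m+1}(\pi_{m+1,m}(\lambda)) = \omega_{m+1,m} \cdot \lambda \quad \text{in } \Lambda_{m+1},$$
where $\omega_{m+1,m} = \sum_{\delta \in \ker(\pi_{m+1,m})} \delta$ is the sum of the $p$ elements of $\mathrm{Gal}(K_{m+1}/K_m)$. Combining this with the factorization $\nu_{m,n} = \nu_{m+1,n} \circ \nu_{m,m+1}$ and with the evident semilinearity $\nu_{m+1,n}(\mu \lambda) = \widetilde{\mu} \cdot \nu_{m+1,n}(\lambda)$ for any lift $\widetilde{\mu} \in \Lambda_n$ of $\mu \in \Lambda_{m+1}$ yields
$$\nu_{m,n}\bigl(\pi_{m+1,m}(\theta_{m+1}(E/K))\bigr) = \widetilde{\omega}_{m+1,m} \cdot \nu_{m+1,n}(\theta_{m+1}(E/K)),$$
which also sits in the target ideal by induction. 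This closes the induction, and no real obstacle arises beyond the bookkeeping of $\nu$ and $\pi$ just described.
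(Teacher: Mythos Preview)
Your proof is correct and follows essentially the same route as the paper: descending induction using the three-term relation \eqref{eqn:three-term-relation} together with the fact that $\nu\circ\pi$ is multiplication by a group-ring element, so that the ``extra'' term produced at each step already lies in the ideal. The only difference is cosmetic: you apply $\nu_{m,n}$ to the relation at level $m$ and isolate the semilinearity $\nu_{m+1,n}(\mu\lambda)=\widetilde{\mu}\,\nu_{m+1,n}(\lambda)$ explicitly, whereas the paper phrases the same point as $\nu_{n-1,n}(\pi_{n,n-1}(\theta_n))=f_n\cdot\theta_n$ and iterates downward; the underlying idea and the structure of the induction are identical.
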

\begin{proof}
From the three term relation (\ref{eqn:three-term-relation}), we have
$$\nu_{n-1, n} \left( \pi_{n, n-1} \left( \theta(E/K_{n}) \right) \right) = a_p \cdot \nu_{n-1, n}  \left( \theta(E/K_{n-1}) \right) - \nu_{n-2, n} \left( \theta(E/K_{n-2}) \right) $$
for $n \geq 2$.
Since 
$\nu_{n-1, n} \left( \pi_{n, n-1} \left( \theta(E/K_{n}) \right) \right) = f_n \cdot \theta(E/K_{n})$
for some $f_n \in \Lambda_n$, we have
$$\nu_{n-2, n} \left( \theta(E/K_{n-2}) \right) \in \left( \theta(E/K_n) , \nu_{n-1, n} \left( \theta(E/K_{n-1}) \right) \right) \subseteq \Lambda_n.$$
In the same manner, we can obtain
$$\nu_{n-3, n-1} \left( \theta(E/K_{n-3}) \right) \in \left( \theta(E/K_{n-1}) , \nu_{n-2, n-1} \left( \theta(E/K_{n-2}) \right) \right) \subseteq \Lambda_{n-1}. $$
By taking $\nu_{n-1, n}$, we have
\begin{align*}
\nu_{n-3, n} \left( \theta(E/K_{n-3}) \right) & \in \left( \nu_{n-1, n} \left( \theta(E/K_{n-1}) \right) , \nu_{n-2, n} \left( \theta(E/K_{n-2}) \right) \right) \\
& \subseteq \left( \theta(E/K_{n}), \nu_{n-1, n} \left( \theta(E/K_{n-1}) \right)  \right) \\
& \subseteq \Lambda_{n}. 
\end{align*}
By applying this argument recursively, the conclusion follows.
\end{proof}

\begin{lem} \label{lem:p-stabilization}
Under (Spl) and (Na), we have an equality of ideals of $\Lambda_n$
 $$\left( \theta(E/K_n) , \nu_{n-1, n} \left( \theta(E/K_{n-1}) \right) \right) = \left(  \theta(f_\alpha/K_{n}) \right)  .$$
\end{lem}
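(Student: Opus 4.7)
The plan is to argue by induction on $n$. The inclusion $\theta_n(f_\alpha/K) \in \bigl(\theta_n(E/K), \nu_{n-1,n}(\theta_{n-1}(E/K))\bigr)$ is immediate from (\ref{eqn:p-stabilization}) after clearing the unit $\alpha^{-n-1}$, so the content of the lemma lies in the reverse inclusion.

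The base case $n = 0$ asks for $(\theta_0(E/K)) = (\theta_0(f_\alpha/K))$ inside $\Lambda_0 = \mathbb{Z}_p$. This follows from the level-$0$ analogue of (\ref{eqn:p-stabilization}), which expresses $\theta_0(f_\alpha/K)$ as $\theta_0(E/K)$ times an Euler factor at $p$ of the form $1 - \alpha^{-1}(\cdot)$. Assumption (Na) implies $\alpha \equiv a_p \not\equiv 1 \pmod p$, so this Euler factor is a unit in $\mathbb{Z}_p$.

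For the inductive step, suppose the lemma holds at level $n - 1$, so in particular $\theta_{n-1}(E/K) = u \cdot \theta_{n-1}(f_\alpha/K)$ for some $u \in \Lambda_{n-1}$. Pick any lift $\tilde u \in \Lambda_n$ of $u$. The norm compatibility (\ref{eqn:norm-compatibility}) gives
\begin{equation*}
\pi_{n,n-1}\bigl(\tilde u \cdot \theta_n(f_\alpha/K)\bigr) \;=\; u \cdot \theta_{n-1}(f_\alpha/K) \;=\; \theta_{n-1}(E/K),
\end{equation*}
so $\tilde u \cdot \theta_n(f_\alpha/K)$ is a lift of $\theta_{n-1}(E/K)$ to $\Lambda_n$. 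Set $\omega := \nu_{n-1,n}(1) = \sum_{\tau \in \ker \pi_{n,n-1}}\tau \in \Lambda_n$. A short check, using that $\omega$ annihilates $\ker \pi_{n,n-1}$, shows that for every $x \in \Lambda_{n-1}$ and every lift $\tilde x \in \Lambda_n$ of $x$ one has $\nu_{n-1,n}(x) = \omega \cdot \tilde x$. Applied with $\tilde x = \tilde u \cdot \theta_n(f_\alpha/K)$ this yields
\begin{equation*}
\nu_{n-1,n}(\theta_{n-1}(E/K)) \;=\; \omega \cdot \tilde u \cdot \theta_n(f_\alpha/K) \;\in\; (\theta_n(f_\alpha/K)).
\end{equation*}
Rewriting (\ref{eqn:p-stabilization}) as $\theta_n(E/K) = \alpha^n \theta_n(f_\alpha/K) + \alpha^{-1}\nu_{n-1,n}(\theta_{n-1}(E/K))$ then also places $\theta_n(E/K)$ in $(\theta_n(f_\alpha/K))$, completing the inductive step.

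The only delicate point is the base case, which is where (Na) actually enters: formula (\ref{eqn:p-stabilization}) is only available for $n \geq 1$, and the level-$0$ relation must be unpacked directly from the definitions of the Jacquet--Langlands transfer $\phi_{f_\alpha}$ and the Gross point $\varsigma_p^{(0)}$. The inductive step itself is purely formal, using only (\ref{eqn:p-stabilization}), the norm compatibility (\ref{eqn:norm-compatibility}), and the basic identity $\nu_{n-1,n}(x) = \omega \cdot \tilde x$ for any lift $\tilde x$.
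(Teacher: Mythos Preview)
Your argument is correct and follows essentially the same route as the paper's own proof: both handle the easy inclusion via (\ref{eqn:p-stabilization}), establish the base case $n=0$ from the relation $\theta_0(f_\alpha/K) = (1-\alpha^{-1})\theta_0(E/K)$ together with (Na), and then run an induction using the norm compatibility (\ref{eqn:norm-compatibility}) and the identity $\nu_{n-1,n}\circ\pi_{n,n-1} = \text{multiplication by }\omega$. Your write-up is in fact a bit cleaner about lifting $u\in\Lambda_{n-1}$ to $\tilde u\in\Lambda_n$ before multiplying, whereas the paper leaves this implicit.
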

\begin{proof}
By the definition of the $p$-stabilization (\ref{eqn:p-stabilization}), we have one inclusion $\supseteq$.
Hence, we focus on the opposite inclusion.
By the interpolation formula of the anticyclotomic $p$-adic $L$-functions \cite[Thm. A]{chida-hsieh-p-adic-L-functions} under (Spl), we have the comparison of (the square-roots of) $L$-values
$$\theta(f_\alpha/K) = \left( 1 - \dfrac{1}{\alpha} \right) \cdot \theta(E/K).$$
Under (Na), we have equality in $\mathbb{Z}_p$
$$\left( 1 - \dfrac{1}{\alpha} \right)^{-1} \cdot \theta(f_\alpha/K) = \theta(E/K),$$
so we have  $(\theta(E/K)) \subseteq (\theta(f_\alpha/K))$. In fact, they are the same ideal.
From (\ref{eqn:p-stabilization}) and (\ref{eqn:norm-compatibility}), we have
\begin{align*}
\theta(f_\alpha/K_1) & = \dfrac{1}{\alpha} \cdot \left(  \theta(E/K_1) - \dfrac{1}{\alpha} \cdot \nu_{0, 1}( \theta(E/K) )  \right) \\
& =  \dfrac{1}{\alpha} \cdot \left(  \theta(E/K_1) - \dfrac{1}{\alpha} \cdot \nu_{0, 1}( \left( 1 - \dfrac{1}{\alpha} \right)^{-1} \cdot \theta(f_\alpha/K) )  \right) \\
& = \dfrac{1}{\alpha} \cdot \left(  \theta(E/K_1) - \dfrac{1}{\alpha} \cdot \left( 1 - \dfrac{1}{\alpha} \right)^{-1} \cdot \nu_{0, 1}(  \pi_{1,0} ( \theta(f_\alpha/K_1)) )  \right) \\
& = \dfrac{1}{\alpha} \cdot \left(  \theta(E/K_1) - \dfrac{1}{\alpha} \cdot \left( 1 - \dfrac{1}{\alpha} \right)^{-1} \cdot f_1 \cdot \theta(f_\alpha/K_1)   \right)
\end{align*}
for some $f_1 \in \Lambda_1$. This shows that
$\theta(E/K_{1}) = g_{1} \cdot \theta(f_\alpha/K_{1})$
for some $g_{1} \in \Lambda_{1}$.

We suppose that
$\theta(E/K_{n-1}) = g_{n-1} \cdot \theta(f_\alpha/K_{n-1})$
for some $g_{n-1} \in \Lambda_{n-1}$.
\begin{align*}
\theta(f_\alpha/K_n) & = \dfrac{1}{\alpha^n} \cdot \left(  \theta(E/K_n) - \dfrac{1}{\alpha} \cdot \nu_{n-1, n}( \theta(E/K_{n-1}) )  \right) \\
& =  \dfrac{1}{\alpha^n} \cdot \left(  \theta(E/K_{n}) - \dfrac{1}{\alpha} \cdot \nu_{n-1, n}( g_{n-1} \cdot \theta(f_\alpha/K_{n-1}) )  \right) \\
& = \dfrac{1}{\alpha^n} \cdot \left(  \theta(E/K_n) - \dfrac{1}{\alpha} \cdot g_{n-1} \cdot \nu_{n-1, n}(  \pi_{n,n-1} ( \theta(f_\alpha/K_n)) )  \right) \\
& = \dfrac{1}{\alpha^n} \cdot \left(  \theta(E/K_1) - \dfrac{1}{\alpha} \cdot g_{n-1} \cdot f_{n} \cdot \theta(f_\alpha/K_n)   \right)
\end{align*}
for some $f_n \in \Lambda_n$.
 This shows that
$\theta(E/K_{n}) = g_{n} \cdot \theta(f_\alpha/K_{n})$
for some $g_{n} \in \Lambda_{n}$.
By induction, we have inclusion
$$(\theta(E/K_n)) \subseteq (\theta(f_\alpha/K_n)),$$
so we also have
$$( \nu_{n-1, n} \left( \theta(E/K_{n-1}) \right) ) \subseteq ( \nu_{n-1, n} \left( \theta(f_\alpha/K_{n-1}) \right)).$$
Since $\nu_{n-1, n} \left( \theta(f_\alpha/K_{n-1}) \right) = f_{n} \cdot \theta(f_\alpha/K_n)$, we have
$$( \nu_{n-1, n} \left( \theta(f_\alpha/K_{n-1}) \right)) \subseteq ( \theta(f_\alpha/K_{n}) ).$$
The conclusion follows.
\end{proof}

\section{Comparison of Selmer groups}
\subsection{Local properties of Galois representations}
Let $\rho : G_{\mathbb{Q}} \to \mathrm{Aut}_{\mathbb{Q}_p}(V) = \mathrm{GL}_2(\mathbb{Q}_p)$ be the two-dimensional Galois representation associated to $E$.
\begin{itemize}
\item Since $E$ is good ordinary at $p$, we have
$$\rho \vert_{ G_{\mathbb{Q}_p} } \sim 
\begin{pmatrix}
\chi^{-1}_{\alpha} \cdot \chi_{\mathrm{cyc}} & * \\
0  & \chi_{\alpha}
\end{pmatrix}$$
where $\chi_{\alpha}$ is the unramified character sending the arithmetic Frobenius at $p$ to $\alpha$.
\item For $\ell$ dividing $N$ exactly, we also have
$$\rho \vert_{ G_{\mathbb{Q}_\ell} } \sim 
\begin{pmatrix}
\pm \chi_{\mathrm{cyc}} & * \\
0  & \pm \mathbf{1}
\end{pmatrix} .$$
\end{itemize}
For a rational prime $v$ dividing $N^-p$, we consider the following subspaces:
\begin{itemize}
\item For $v= p$, let $F^+V \subseteq V$ be the subspace on which the inertia subgroup $I_v$ acts by $\chi_{\mathrm{cyc}}$.
\item For a rational prime $v$ dividing $N^-$, $F^+V \subseteq V$ be the subspace on which the inertia subgroup $I_v$ acts by $\chi_{\mathrm{cyc}}$ or $\chi_{\mathrm{cyc}} \tau_v$
where $\tau_v$ is the non-trivial unramified quadratic character of $G_{\mathbb{Q}_v}$.
\end{itemize}
Let $L$ be an algebraic extension of $K$.
For a prime $w$ of $L$ dividing $N^-p$, we define the \textbf{ordinary local condition of $V$ at $w$} by
$$\mathrm{H}^1_{\mathrm{ord}}(L_w, V) = \mathrm{ker} \left( \mathrm{H}^1(L_w, V) \to \mathrm{H}^1(L_w, V/F^+V) \right).$$
Denote by $T = \varprojlim_k E[p^k]$ the $p$-adic Tate module of $E$, so we have $T \otimes_{\mathbb{Z}_p} \mathbb{Q}_p = V$, and by $E[p^\infty] = \varinjlim_k E[p^k]$ the $p$-power torsion points of $E$.
Then the same local conditions for $T$, $T/p^kT$, $E[p^\infty]$, and $E[p^k]$ are defined by propagation.

\subsection{$N^-$-ordinary (residual) Selmer groups}
Let $\Sigma$ be the finite set of places of $\mathbb{Q}$ consisting of the places dividing $Np\infty$, and $K_\Sigma$ be the maximal extension of $K$ unramified outside $\Sigma$.
We write 
\begin{itemize}
\item $\Sigma^+ \subseteq \Sigma$
to be the subset of $\Sigma$ consisting of the places not dividing $p\infty$ which split in $K/\mathbb{Q}$, and
\item $\Sigma^- \subseteq \Sigma$
to be the subset of $\Sigma$ consisting of the places not dividing $p\infty$ which are inert in $K/\mathbb{Q}$.
\end{itemize}
For a place $w$ of $K_\infty$, we write $w \in \Sigma^{\pm}$ if $w$ divides a rational prime $\ell$ contained in $\Sigma^{\pm}$, respectively.
For every $k \geq 1$, we define the \textbf{$N^-$-ordinary (and $N^+$-strict) Selmer group of $E[p^k]$} 
$\mathrm{Sel}_{N^-}(K_\infty, E[p^k])$
by the kernel of the map
$$\mathrm{H}^1(K_\Sigma/ K_\infty, E[p^k]) \to \prod_{w \nmid \Sigma^+} \mathrm{H}^1(K_{\infty, w}, E[p^k]) \times \prod_{w \in \Sigma^- \textrm{ or } w \vert p} \dfrac{\mathrm{H}^1(K_{\infty,w}, E[p^k])}{\mathrm{H}^1_{\mathrm{ord}}(K_{\infty,w}, E[p^k])}$$ 
and define
$\mathrm{Sel}_{N^-}(K_\infty, E[p^\infty]) = \varinjlim_k \mathrm{Sel}_{N^-}(K_\infty, E[p^k])$.
This is \emph{the} Selmer group used in the bipartite Euler system argument \cite[Def. 2.8]{bertolini-darmon-imc-2005}.

\subsection{Minimal and Greenberg Selmer groups}
We follow the convention of \cite[\S3.1]{pw-mu}.
The \textbf{minimal Selmer group $\mathrm{Sel}_{\mathrm{min}}(K_\infty, E[p^\infty])$ of $E[p^\infty]$} is defined by the kernel of the map
$$\mathrm{H}^1(K_\infty, E[p^\infty]) \to \prod_{w \nmid p} \mathrm{H}^1(K_{\infty, w}, E[p^\infty]) \times \prod_{w \vert p} \dfrac{\mathrm{H}^1(K_{\infty,w}, E[p^\infty])}{\mathrm{H}^1_{\mathrm{ord}}(K_{\infty,w}, E[p^\infty])},$$
and the \textbf{Greenberg Selmer group $\mathrm{Sel}_{\mathrm{Gr}}(K_\infty, E[p^\infty])$ of $E[p^\infty]$} is defined by the kernel of the map
$$\mathrm{H}^1(K_\infty, E[p^\infty]) \to \prod_{w \nmid p} \mathrm{H}^1(I_{\infty, w}, E[p^\infty]) \times 
\prod_{w \vert p} \dfrac{\mathrm{H}^1(K_{\infty,w}, E[p^\infty])}{\mathrm{H}^1_{\mathrm{ord}}(K_{\infty,w}, E[p^\infty])}$$
where $I_{\infty, w}$ is the inertia subgroup of $G_{K_{\infty,w}}$.
Under (Ram), $\overline{\rho}$ is ramified at every prime dividing $N$, so $p$ does not divide any Tamagawa factors.
Then by using \cite[Lem. 3.4]{pw-mu}, we have an isomorphism
\begin{equation} \label{eqn:minimal-greenberg}
\mathrm{Sel}_{\mathrm{min}}(K_\infty, E[p^\infty]) \simeq \mathrm{Sel}_{\mathrm{Gr}}(K_\infty, E[p^\infty]).
\end{equation}
\subsection{The comparison}
We recall the final displayed equation in the proof of \cite[Prop. 3.6]{pw-mu}:
\[
\xymatrix{
0 \ar[r] &\mathrm{Sel}_{N^-}(K_\infty, E[p^k]) \ar[r] & \mathrm{Sel}_{\mathrm{min}}(K_\infty, E[p^\infty])[p^k] \ar[r] &  \prod_{w} 
\dfrac{(E[p^\infty])^{G_{K_{\infty, w}}}}{p^k (E[p^\infty])^{G_{K_{\infty, w}}}} 
}
\]
where $w$ runs over the primes of $K_\infty$ dividing $N^+$. The local conditions at primes dividing $N^-$ of minimal Selmer groups and $N^-$-ordinary Selmer groups coincide since such primes split completely in $K_\infty/K$.
Thus, we have inclusion
$$\mathrm{Sel}_{N^-}(K_\infty, E[p^k]) \subseteq \mathrm{Sel}_{\mathrm{min}}(K_\infty, E[p^\infty])[p^k]$$
which is of finite index and is independent of $k$.
\begin{prop} \label{prop:cotorsion-implies-cotorsion}
If $\mathrm{Sel}_{N^-}(K_\infty, E[p^\infty])$ is $\Lambda$-cotorsion with vanishing of $\mu$-invariant, then
 $\mathrm{Sel}_{\mathrm{min}}(K_\infty, E[p^\infty])$ is also $\Lambda$-cotorsion with vanishing of $\mu$-invariant.
\end{prop}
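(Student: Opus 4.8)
The plan is to bootstrap everything from the finite-index inclusion recorded just before the statement. Write $A_k = \mathrm{Sel}_{N^-}(K_\infty, E[p^k])$ and $B_k = \mathrm{Sel}_{\mathrm{min}}(K_\infty, E[p^\infty])[p^k]$, so that $A_k \subseteq B_k$ compatibly in $k$ and, by the displayed exact sequence, $B_k/A_k$ injects into $\prod_{w \mid N^+} (E[p^\infty])^{G_{K_{\infty,w}}}/p^k(E[p^\infty])^{G_{K_{\infty,w}}}$, a finite group whose order is bounded by a constant $c$ independent of $k$. First I would pass to the direct limit over $k$. Since $\varinjlim_k A_k = \mathrm{Sel}_{N^-}(K_\infty, E[p^\infty])$ by definition, since $\varinjlim_k B_k = \mathrm{Sel}_{\mathrm{min}}(K_\infty, E[p^\infty])$ because the latter is $p$-power torsion, and since direct limits are exact, I obtain a short exact sequence
\[
0 \to \mathrm{Sel}_{N^-}(K_\infty, E[p^\infty]) \to \mathrm{Sel}_{\mathrm{min}}(K_\infty, E[p^\infty]) \to C \to 0, \qquad C := \varinjlim_k\, B_k/A_k .
\]
Because the transition maps are compatible and each $B_k/A_k$ has order at most $c$, the colimit $C$ is the increasing union of the images of the $B_k/A_k$, hence a finite group of order at most $c$.

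Next I would dualize. Pontryagin duality is exact and contravariant, so the sequence above yields
\[
0 \to C^\vee \to \mathrm{Sel}_{\mathrm{min}}(K_\infty, E[p^\infty])^\vee \to \mathrm{Sel}_{N^-}(K_\infty, E[p^\infty])^\vee \to 0
\]
with $C^\vee$ finite. By hypothesis the rightmost term is a finitely generated torsion $\Lambda$-module with vanishing $\mu$-invariant, and a finite $\Lambda$-module is pseudo-null, hence finitely generated, torsion, and of vanishing $\mu$-invariant. Since $\Lambda$ is Noetherian, an extension of a finitely generated module by a finitely generated module is finitely generated; an extension of a torsion module by a torsion module is torsion; and $\mu$ is additive in short exact sequences of finitely generated torsion $\Lambda$-modules (localize at the height-one prime $p\Lambda$ and use additivity of length). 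Therefore $\mathrm{Sel}_{\mathrm{min}}(K_\infty, E[p^\infty])^\vee$ is a finitely generated torsion $\Lambda$-module with $\mu = 0$, i.e. $\mathrm{Sel}_{\mathrm{min}}(K_\infty, E[p^\infty])$ is $\Lambda$-cotorsion with vanishing $\mu$-invariant.

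The only genuinely delicate point is that the cokernel $C$ stays finite after passing to the limit, and this is precisely where the uniformity in $k$ of the index $[B_k : A_k]$ — emphasized in the discussion preceding the proposition — is needed; if the index were allowed to grow with $k$ the argument would collapse. Everything else is formal: exactness of direct limits, exactness of Pontryagin duality, pseudo-nullity of finite $\Lambda$-modules, and additivity of the $\mu$-invariant.
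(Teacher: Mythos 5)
Your proof is correct, but it takes a genuinely different route from the paper's. The paper's argument works entirely at the level of $p$-torsion: it notes that $\mathrm{Sel}_{N^-}(K_\infty, E[p^\infty])[p] = \mathrm{Sel}_{N^-}(K_\infty, E[p])$, invokes the standard equivalence (cotorsion with $\mu=0$) $\Leftrightarrow$ ($p$-torsion subgroup finite) — the direction needed is cited from Kim--Pollack--Weston — so that the hypothesis gives finiteness of $\mathrm{Sel}_{N^-}(K_\infty, E[p])$, then uses the finite-index inclusion only for $k=1$ to deduce finiteness of $\mathrm{Sel}_{\mathrm{min}}(K_\infty, E[p^\infty])[p]$, and finally applies the same equivalence in reverse. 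You instead take the direct limit over all $k$ to produce a genuine short exact sequence $0 \to \mathrm{Sel}_{N^-} \to \mathrm{Sel}_{\mathrm{min}} \to C \to 0$ with $C$ finite, dualize, and conclude from the additivity of $\mu$ and finite generation in short exact sequences. Both proofs use the same geometric input (the finite-index inclusion with uniformly bounded index), but your formal-homological packaging is more self-contained in that it does not need the auxiliary equivalence via the $p$-torsion subgroup; on the other hand the paper's version is slightly more economical, since it only needs the inclusion at level $k=1$ and avoids the (mild but real) bookkeeping of showing that the cokernels $B_k/A_k$ have a bounded colimit. One small point worth making explicit in your argument: the boundedness of $|B_k/A_k|$ requires that only finitely many primes $w$ of $K_\infty$ lie above $N^+$; this holds because split primes of $K$ are finitely decomposed in the anticyclotomic tower, and each local factor $(E[p^\infty])^{G_{K_{\infty,w}}}/p^k(\cdots)$ has order bounded by the size of the finite part of $(E[p^\infty])^{G_{K_{\infty,w}}}$ independently of $k$.
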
 
\begin{proof}
We have
$\mathrm{Sel}_{N^-}(K_\infty, E[p^\infty])[p] = \mathrm{Sel}_{N^-}(K_\infty, E[p])$
since $N^-$-ordinary Selmer groups of $E[p^\infty]$ are defined as the injective limit of $N^-$-ordinary Selmer groups of $E[p^k]$.
By the assumption,
$\mathrm{Sel}_{N^-}(K_\infty, E[p])$ is finite as noted in the proof of \cite[Cor. 2.3]{kim-pollack-weston}.
Since the inclusion 
$\mathrm{Sel}_{N^-}(K_\infty, E[p]) \subseteq \mathrm{Sel}_{\mathrm{min}}(K_\infty, E[p^\infty])[p]$
is of finite index, 
$\mathrm{Sel}_{\mathrm{min}}(K_\infty, E[p^\infty])[p]$ is also finite. 
By the same reasoning, the conclusion follows.
\end{proof}
\begin{prop} \label{prop:no-finite}
Under (Im), if $\mathrm{Sel}_{\mathrm{min}}(K_\infty, E[p^\infty])$ is $\Lambda$-cotorsion, then
$\mathrm{Sel}_{\mathrm{min}}(K_\infty, E[p^\infty])$ has no proper $\Lambda$-submodule of finite index.
Thus, we have
\begin{align*}
\mathrm{char}_{\Lambda} \mathrm{Sel}_{\mathrm{min}}(K_\infty, E[p^\infty]) & = \mathrm{Fitt}_{\Lambda} \mathrm{Sel}_{\mathrm{min}}(K_\infty, E[p^\infty]) ,\\
\mathrm{Sel}_{\mathrm{min}}(K_\infty, E[p^\infty]) & \simeq \mathrm{Sel}_{N^-}(K_\infty, E[p^\infty]) .
\end{align*}
\end{prop}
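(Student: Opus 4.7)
The plan is to establish three claims in sequence: (a) under (Im), $\mathrm{Sel}_{\mathrm{min}}(K_\infty, E[p^\infty])$ has no proper $\Lambda$-submodule of finite index, equivalently its Pontryagin dual has no nonzero finite $\Lambda$-submodule; (b) the equality $\mathrm{char}_\Lambda = \mathrm{Fitt}_\Lambda$ follows as an algebraic consequence of (a); and (c) the isomorphism with $\mathrm{Sel}_{N^-}(K_\infty, E[p^\infty])$ is deduced by Pontryagin duality from the comparison already established in this section.

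For (a), the key input is that under (Im) the image of $\overline{\rho}$ is all of $\mathrm{GL}_2(\mathbb{F}_p)$, and since $K_\infty/K$ is a pro-$p$ abelian extension, the image of $\overline{\rho}|_{G_{K_\infty}}$ still contains $\mathrm{SL}_2(\mathbb{F}_p)$; in particular $E[p^\infty]^{G_{K_\infty}} = 0$. Combined with the identification $\mathrm{Sel}_{\mathrm{min}} \simeq \mathrm{Sel}_{\mathrm{Gr}}$ of \eqref{eqn:minimal-greenberg}, I would invoke the standard Greenberg-type argument for the non-existence of finite $\Lambda$-submodules in the Pontryagin dual of Greenberg Selmer groups, as carried out in \cite{pw-mu} under precisely the present hypotheses, following Greenberg's original treatment of Iwasawa theory for ordinary elliptic curves.

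For (b), since $\Lambda$ is a regular local ring of Krull dimension two, a finitely generated torsion $\Lambda$-module $X$ with no nonzero finite submodule has depth one, hence is Cohen--Macaulay, and by the Auslander--Buchsbaum formula has projective dimension at most one. It therefore admits a square free resolution $0 \to \Lambda^r \to \Lambda^r \to X \to 0$, so the zeroth Fitting ideal of $X$ is principal, generated by the determinant of the presenting matrix; by the elementary divisors form of the structure theorem this determinant generates the characteristic ideal. Applying this with $X = \mathrm{Sel}_{\mathrm{min}}(K_\infty, E[p^\infty])^\vee$ yields the desired equality. For (c), the inclusion $\mathrm{Sel}_{N^-}(K_\infty, E[p^k]) \subseteq \mathrm{Sel}_{\mathrm{min}}(K_\infty, E[p^\infty])[p^k]$ established above is of finite index independent of $k$, so taking the direct limit yields an inclusion $\mathrm{Sel}_{N^-}(K_\infty, E[p^\infty]) \hookrightarrow \mathrm{Sel}_{\mathrm{min}}(K_\infty, E[p^\infty])$ with finite cokernel. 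Dualizing, the resulting surjection $\mathrm{Sel}_{\mathrm{min}}(K_\infty, E[p^\infty])^\vee \twoheadrightarrow \mathrm{Sel}_{N^-}(K_\infty, E[p^\infty])^\vee$ has finite kernel, which by (a) must vanish.

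The step I expect to be the main obstacle is (a). The ``no finite submodule'' property for the dual Selmer group is notably more delicate in the anticyclotomic tower than in the cyclotomic setting because $p$ splits in $K$ and the primes above $p$ behave differently under the anticyclotomic Galois action; careful control of the local cohomology groups $\mathrm{H}^1_{\mathrm{ord}}(K_{\infty,w}, E[p^\infty])$ for $w \mid p$, as well as at primes in $\Sigma^+$, is required to rule out spurious finite contributions. Provided the argument of \cite{pw-mu} applies verbatim under (Im) and (Ram) in our setting, which is the case, this obstacle is circumvented by citation, after which steps (b) and (c) are entirely formal.
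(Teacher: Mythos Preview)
Your proposal is correct and matches the paper's approach: the paper handles (a) by citing \cite[Prop.~4.14]{greenberg-lnm} together with its anticyclotomic adaptation noted in the proof of \cite[Prop.~3.6]{pw-mu}, and leaves (b) and (c) as implicit formal consequences, which you have spelled out correctly. One minor point: your detour through the identification \eqref{eqn:minimal-greenberg} in step (a) imports hypothesis (Ram), whereas the proposition as stated and the paper's proof use only (Im), applying Greenberg's argument directly to $\mathrm{Sel}_{\mathrm{min}}$ without first passing to $\mathrm{Sel}_{\mathrm{Gr}}$.
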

\begin{proof}
This follows from \cite[Prop. 4.14]{greenberg-lnm}, which covers the cyclotomic case actually, but the argument generalizes to our setting as mentioned in the proof of \cite[Prop. 3.6]{pw-mu}.
\end{proof}
The following corollary follows from (\ref{eqn:minimal-greenberg}) and the above two propositions.
\begin{cor} \label{cor:selmer}
Under (Im) and (Ram), if $\mathrm{Sel}_{N^-}(K_\infty, E[p^\infty])$ is $\Lambda$-cotorsion with vanishing of $\mu$-invariant, we have isomorphisms
$$\mathrm{Sel}_{N^-}(K_\infty, E[p^\infty]) \simeq \mathrm{Sel}_{\mathrm{min}}(K_\infty, E[p^\infty]) \simeq \mathrm{Sel}_{\mathrm{Gr}}(K_\infty, E[p^\infty]).$$
\end{cor}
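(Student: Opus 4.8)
The plan is simply to chain the two preceding propositions with the isomorphism (\ref{eqn:minimal-greenberg}), feeding the conclusion of each into the hypothesis of the next. At this point all the cohomological input has already been assembled, so the argument amounts to an organized invocation of what precedes, together with some bookkeeping of which hypothesis is used where.

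First I would note that the standing hypothesis — that $\mathrm{Sel}_{N^-}(K_\infty, E[p^\infty])$ is $\Lambda$-cotorsion with vanishing $\mu$-invariant — is precisely the hypothesis of Proposition \ref{prop:cotorsion-implies-cotorsion}, whose conclusion is that $\mathrm{Sel}_{\mathrm{min}}(K_\infty, E[p^\infty])$ is likewise $\Lambda$-cotorsion with vanishing $\mu$-invariant. (Only cotorsion-ness is needed downstream, but it is worth recording that $\mu = 0$ survives, by the same computation of the $[p]$-torsion.) Next, since (Im) is in force and $\mathrm{Sel}_{\mathrm{min}}(K_\infty, E[p^\infty])$ is now known to be $\Lambda$-cotorsion, Proposition \ref{prop:no-finite} applies: it gives that $\mathrm{Sel}_{\mathrm{min}}(K_\infty, E[p^\infty])$ has no proper $\Lambda$-submodule of finite index and, among its conclusions, the isomorphism $\mathrm{Sel}_{\mathrm{min}}(K_\infty, E[p^\infty]) \simeq \mathrm{Sel}_{N^-}(K_\infty, E[p^\infty])$. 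Concretely, taking the direct limit over $k$ of the exact sequence displayed just before Proposition \ref{prop:cotorsion-implies-cotorsion} exhibits $\mathrm{Sel}_{N^-}(K_\infty, E[p^\infty])$ as a $\Lambda$-submodule of $\mathrm{Sel}_{\mathrm{min}}(K_\infty, E[p^\infty])$ of finite index, and the absence of proper finite-index $\Lambda$-submodules forces equality. Finally, (Ram) guarantees that $\overline{\rho}$ is ramified at every prime dividing $N^-$, hence that $p$ divides no Tamagawa factor, so (\ref{eqn:minimal-greenberg}) — that is, \cite[Lem. 3.4]{pw-mu} — supplies $\mathrm{Sel}_{\mathrm{min}}(K_\infty, E[p^\infty]) \simeq \mathrm{Sel}_{\mathrm{Gr}}(K_\infty, E[p^\infty])$. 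Composing the two isomorphisms yields the claimed chain.

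The main obstacle is really not located in this corollary at all: the substance sits in Propositions \ref{prop:cotorsion-implies-cotorsion} and \ref{prop:no-finite} (the latter importing Greenberg's no-proper-finite-index-submodule result \cite[Prop. 4.14]{greenberg-lnm} via the analysis of \cite[Prop. 3.6]{pw-mu}) and in \cite[Lem. 3.4]{pw-mu}. What one must be careful about here is purely the accounting of hypotheses — (Na) is not used, (Im) enters only through Proposition \ref{prop:no-finite}, (Ram) only through (\ref{eqn:minimal-greenberg}), and the cotorsion/$\mu=0$ assumptions only through Proposition \ref{prop:cotorsion-implies-cotorsion} — and the observation that the finite-index inclusion of Selmer groups is genuinely $\Lambda$-equivariant, so that ``finite index'' can be upgraded to ``equal'' using the structural statement of Proposition \ref{prop:no-finite}. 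As a byproduct the argument also records $\mathrm{char}_{\Lambda} \mathrm{Sel}_{N^-}(K_\infty, E[p^\infty]) = \mathrm{Fitt}_{\Lambda} \mathrm{Sel}_{N^-}(K_\infty, E[p^\infty])$, which is the form needed later.
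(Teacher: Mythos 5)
Your proof is correct and is essentially the paper's own argument, which consists of the single sentence ``The following corollary follows from (\ref{eqn:minimal-greenberg}) and the above two propositions'' — you have merely spelled out the chaining and the bookkeeping of hypotheses, all of which is accurate. The one elaboration you add beyond the paper's terse statement (that the finite-index $\Lambda$-equivariant inclusion $\mathrm{Sel}_{N^-} \hookrightarrow \mathrm{Sel}_{\mathrm{min}}$ is upgraded to equality by the no-finite-index-submodule property) is a correct unpacking of how Proposition \ref{prop:no-finite} yields its second conclusion.
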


\section{The proof of the main theorem via Iwasawa theory} \label{sec:iwasawa-theory}
We first gather some tools from Iwasawa theory and give a proof of Theorem \ref{thm:main}.
\subsection{Iwasawa theory}
The anticyclotomic main conjecture for $(E,p, K)$ is now completely known for our setting. 
\begin{thm} \label{thm:main-conj}
Under (Im),(Ram),(Spl),(Na), and (Def), we have the following statements:
\begin{enumerate}
\item $L_p(E/K_\infty)$ is non-zero.
\item $\mu(L_p(E/K_\infty))= 0 $.
\item $\mathrm{Sel}_{N^-}(K_\infty, E[p^\infty])$ is $\Lambda$-cotorsion with vanishing of $\mu$-invariants.
\item $\left( L_p(E/K_\infty) \right) = \mathrm{char}_\Lambda \left( \mathrm{Sel}(K_\infty, E[p^\infty])^\vee \right)$.
\end{enumerate}
\end{thm}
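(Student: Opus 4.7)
The plan is to assemble Theorem \ref{thm:main-conj} from the existing literature on the anticyclotomic Iwasawa main conjecture in the definite case; with our hypotheses we are squarely in the setting covered by several combined authors. The overall strategy is: first establish the analytic statements (1) and (2) for theta elements, then use one divisibility in the main conjecture to pass to the Selmer side (3), and finally invoke the opposite divisibility to obtain equality in (4).

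For (1), the non-vanishing of $L_p(E/K_\infty) = \varprojlim_n \theta_n(f_\alpha/K)\cdot\iota(\theta_n(f_\alpha/K))$ reduces, via the interpolation property of the theta elements, to the non-vanishing of sufficiently many central values $L(E/K,\chi,1)$ as $\chi$ ranges over finite-order anticyclotomic characters; this is Vatsal's and Cornut--Vatsal's non-vanishing theorem in the definite setting. For (2), it is enough to show $\mu(\theta_n(f_\alpha/K)) = 0$ for some $n$, which is exactly the content of the Pollack--Weston $\mu$-invariant calculation carried out under (Im) and (Ram); the vanishing then propagates to the limit.

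For (3) and (4), I invoke the anticyclotomic main conjecture as a black box. One divisibility,
\[
\mathrm{char}_\Lambda\left(\mathrm{Sel}_{N^-}(K_\infty, E[p^\infty])^\vee\right) \,\supseteq\, (L_p(E/K_\infty)),
\]
follows from Bertolini--Darmon's bipartite Euler system argument via Ribet-type level-raising and Kolyvagin systems in the definite setting, as refined by Pollack--Weston and Chida--Hsieh; combined with (1) and (2) this already forces $\mathrm{Sel}_{N^-}(K_\infty,E[p^\infty])$ to be $\Lambda$-cotorsion with vanishing $\mu$-invariant, giving (3). The reverse divisibility, yielding (4), is available from the Skinner--Urban / Wan proof of the Greenberg main conjecture via Eisenstein congruences on unitary groups (transferred to the anticyclotomic setting through Corollary \ref{cor:selmer} and the isomorphism \eqref{eqn:minimal-greenberg}), and also from the directly anticyclotomic arguments of Skinner--Zhang and Burungale--Castella type work.

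The main obstacle here is not mathematical but bibliographic: one must carefully verify that each cited divisibility and each ingredient (Vatsal non-vanishing, Pollack--Weston $\mu = 0$, the bipartite Euler system, the Eisenstein congruence input) is valid under the precise hypotheses we impose. In particular, (Def) ensures we are in the definite quaternionic setting where theta elements realize the $p$-adic $L$-function, (Im) and (Ram) supply the freeness/congruence input for Pollack--Weston and for the Euler system machinery, (Spl) underlies the $p$-ordinary interpolation, and (Na) is used in Lemma \ref{lem:p-stabilization} to pass between $\theta_n(E/K)$ and $\theta_n(f_\alpha/K)$, so that the formulation of (4) in terms of $L_p(E/K_\infty)$ is compatible with the theta-element formulation used in the statement of Theorem \ref{thm:main}.
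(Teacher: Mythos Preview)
Your proposal is essentially correct and follows the same four-step assembly as the paper: Vatsal-type non-vanishing for (1), $\mu=0$ for (2), the Bertolini--Darmon bipartite Euler system divisibility combined with (1) and (2) for (3), and Skinner--Urban for the reverse divisibility in (4), with Corollary \ref{cor:selmer} mediating between $\mathrm{Sel}_{N^-}$ and the classical Selmer group. Two small corrections: the vanishing of $\mu$ for the anticyclotomic $p$-adic $L$-function is due to Vatsal, not Pollack--Weston (the latter's role is in sharpening the Euler system divisibility under (Im) and (Ram)); and (Spl) is not needed for the interpolation or for steps (1)--(3) but rather enters only to invoke Skinner--Urban in (4), whereas (Na) plays no role in the proof of this theorem itself.
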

\begin{proof}
\begin{enumerate}
\item It is proved in \cite{vatsal-uniform}.
\item It is proved in \cite{vatsal-duke}.
\item This follows from (1), (2), and the Euler system divisibility
$$\left( L_p(E/K_\infty) \right) \subseteq \mathrm{char}_\Lambda \left( \mathrm{Sel}_{N^-}(K_\infty, E[p^\infty])^\vee \right)$$
obtained from the bipartite Euler system argument \cite{bertolini-darmon-imc-2005, pw-mu}.
Condition (Na) is implicitly used in the Euler system argument. See \cite[Assu. 1.1 and Rem. 1.4]{kim-pollack-weston} for this issue. 
\item By using (3) and Corollary \ref{cor:selmer}, we can identify $\mathrm{Sel}_{N^-}(K_\infty, E[p^\infty])$ with the minimal Selmer group. By \cite[Prop. 2.1]{greenberg-lnm}, the minimal Selmer group also coincides with the classical Selmer group.
The opposite divisibility 
$$\left( L_p(E/K_\infty) \right) \supseteq \mathrm{char}_\Lambda \left( \mathrm{Sel}(K_\infty, E[p^\infty])^\vee \right)$$
follows from \cite{skinner-urban}.
Condition (Spl) is needed only for this last statement to invoke \cite{skinner-urban}.
\end{enumerate}
\end{proof}
\begin{cor} \label{cor:no-finite}
Under (Im),(Ram),(Na), and (Def), the classical Selmer group $\mathrm{Sel}(K_\infty, E[p^\infty])$ has no proper $\Lambda$-submodule of finite index; thus, 
$$\mathrm{char}_{\Lambda} \mathrm{Sel}(K_\infty, E[p^\infty]) = \mathrm{Fitt}_{\Lambda} \mathrm{Sel}(K_\infty, E[p^\infty]) .$$
\end{cor}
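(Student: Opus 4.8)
\emph{Proof proposal.} The strategy is to bootstrap from the results already proved for the minimal Selmer group and then identify the classical Selmer group with it. First I would record that $\mathrm{Sel}_{N^-}(K_\infty, E[p^\infty])$ is $\Lambda$-cotorsion with vanishing $\mu$-invariant by Theorem~\ref{thm:main-conj}(3); hence $\mathrm{Sel}_{\mathrm{min}}(K_\infty, E[p^\infty])$ is $\Lambda$-cotorsion, either by Proposition~\ref{prop:cotorsion-implies-cotorsion} or directly by Corollary~\ref{cor:selmer}. Proposition~\ref{prop:no-finite} then shows that $\mathrm{Sel}_{\mathrm{min}}(K_\infty, E[p^\infty])$ has no proper $\Lambda$-submodule of finite index and that $\mathrm{char}_\Lambda \mathrm{Sel}_{\mathrm{min}}(K_\infty, E[p^\infty]) = \mathrm{Fitt}_\Lambda \mathrm{Sel}_{\mathrm{min}}(K_\infty, E[p^\infty])$.

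Next I would transfer these two conclusions to the classical Selmer group. As already invoked in the proof of Theorem~\ref{thm:main-conj}(4), \cite[Prop.~2.1]{greenberg-lnm} yields the identification $\mathrm{Sel}(K_\infty, E[p^\infty]) = \mathrm{Sel}_{\mathrm{min}}(K_\infty, E[p^\infty])$: the strict local conditions away from $p$ and the ordinary condition at $p$ defining the minimal Selmer group coincide with the classical local (Kummer) conditions, because (Ram) forces $p$ not to divide the Tamagawa factors of $E$ — so the classical local condition at each prime dividing $N$ is trivial, as is the minimal one — and because $K_{\infty, w}$ is deeply ramified over $\mathbb{Q}_p$ at the primes $w \mid p$, so the image of the local Kummer map there equals $\mathrm{H}^1_{\mathrm{ord}}$; this is the same type of local comparison as the one carried out in \S3 following \cite[\S3.1]{pw-mu}. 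Granting this, $\mathrm{Sel}(K_\infty, E[p^\infty])$ inherits the property of having no proper $\Lambda$-submodule of finite index, whence $\mathrm{char}_\Lambda \mathrm{Sel}(K_\infty, E[p^\infty]) = \mathrm{Fitt}_\Lambda \mathrm{Sel}(K_\infty, E[p^\infty])$. Here $\mathrm{char}_\Lambda$ and $\mathrm{Fitt}_\Lambda$ of a cofinitely generated cotorsion $\Lambda$-module are understood as those of its Pontryagin dual, and this last equality is the standard fact — already used inside the proof of Proposition~\ref{prop:no-finite} via \cite[Prop.~4.14]{greenberg-lnm} — that a finitely generated torsion $\Lambda$-module with no nonzero finite $\Lambda$-submodule has its characteristic ideal equal to its initial Fitting ideal.

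I do not anticipate a genuine obstacle, since all of the arithmetic input is packaged in Theorem~\ref{thm:main-conj} and the rest is formal. The only point demanding attention is the identification of the classical Selmer group with the minimal one, i.e. the comparison of local conditions at the primes dividing $Np$; but this rests only on \cite[Prop.~2.1]{greenberg-lnm} together with the hypothesis (Ram) on Tamagawa factors, both of which have already been used in \S3 and in the proof of Theorem~\ref{thm:main-conj}, so in the present context this step amounts to bookkeeping.
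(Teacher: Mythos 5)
Your proof follows the same route as the paper's: invoke Proposition~\ref{prop:no-finite} to get the conclusion for $\mathrm{Sel}_{\mathrm{min}}(K_\infty, E[p^\infty])$, then transfer it to the classical Selmer group via the identification $\mathrm{Sel}(K_\infty, E[p^\infty]) = \mathrm{Sel}_{\mathrm{min}}(K_\infty, E[p^\infty])$ coming from \cite[Prop.~2.1]{greenberg-lnm} as already used in the proof of Theorem~\ref{thm:main-conj}(4). The paper's own proof is a one-line statement of exactly this.

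One small imprecision worth flagging: in your explanation of the identification $\mathrm{Sel} = \mathrm{Sel}_{\mathrm{min}}$, you invoke (Ram) and the triviality of Tamagawa factors to conclude that the classical local conditions at primes dividing $N$ coincide with the strict conditions. But the classical (Kummer) local condition $E(K_{\infty,w}) \otimes \mathbb{Q}_p/\mathbb{Z}_p$ vanishes automatically at primes $w \nmid p$, independently of Tamagawa factors, so that comparison needs no hypothesis. The role of (Ram) in this paper is to identify the \emph{minimal} Selmer group with the \emph{Greenberg} Selmer group (equation~\eqref{eqn:minimal-greenberg} via \cite[Lem.~3.4]{pw-mu}), where allowing unramified classes genuinely differs from the strict condition by a module controlled by the Tamagawa factors. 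This does not affect the correctness of your argument — it is a misattribution, not a gap — but it is worth keeping the two comparisons (minimal vs.~classical, and minimal vs.~Greenberg) distinct.
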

\begin{proof}
By Theorem \ref{thm:main-conj}.(3), $\mathrm{Sel}_{N^-}(K_\infty, E[p^\infty])$ is $\Lambda$-cotorsion.
By Proposition \ref{prop:cotorsion-implies-cotorsion},  $\mathrm{Sel}_{\mathrm{min}}(K_\infty, E[p^\infty])$ is also $\Lambda$-cotorsion.
The conclusion follows from Proposition \ref{prop:no-finite} and the identification of the minimal Selmer group and the classical Selmer group.
\end{proof}
We recall the control theorem.
\begin{prop}[Control theorem] \label{prop:control}
Let $\omega_n = \omega_n(X) = (1+X)^{p^n} -1 \in \mathbb{Z}_p\llbracket X \rrbracket \simeq \Lambda$.  
Under (Im), (Na), and (Def), the restriction map
$$\mathrm{Sel}(K_n, E[p^\infty]) \to \mathrm{Sel}(K_\infty, E[p^\infty])[\omega_n] $$
is injective with the finite cokernel whose size is bounded independently of $n$. If we further assume (Ram),
then it is an isomorphism.
\end{prop}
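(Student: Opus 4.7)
The plan is to apply the standard Mazur--Greenberg snake-lemma argument: compare the defining sequences of $\mathrm{Sel}(K_n, E[p^\infty])$ and of $\mathrm{Sel}(K_\infty, E[p^\infty])[\omega_n]$, identify the resulting kernel and cokernel of the control map as global and local $\Gamma_n$-cohomology terms (where $\Gamma_n = \mathrm{Gal}(K_\infty/K_n) \simeq p^n\mathbb{Z}_p$), and then invoke (Im), (Na), (Def), (Ram) one by one to kill these error terms.

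First I would handle the global piece by inflation-restriction applied to $\mathrm{H}^1(K_\Sigma/K_n, E[p^\infty]) \to \mathrm{H}^1(K_\Sigma/K_\infty, E[p^\infty])^{\Gamma_n}$. Since $\Gamma_n$ has cohomological dimension one the $\mathrm{H}^2$-term automatically vanishes; under (Im) the module $E[p^\infty]^{G_{K_\infty}}$ is zero, because $\overline{\rho}$ surjects onto $\mathrm{GL}_2(\mathbb{F}_p)$ and its restriction to $G_{K_\infty}$ still has no fixed vector in $E[p]$ for $p \geq 5$. Consequently the $\mathrm{H}^1$-term also vanishes, the global restriction is an isomorphism, and the control map's kernel and cokernel are determined entirely by the local pieces.

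The remaining work is local, place by place. For a prime $w$ of $K_n$ not above $p$, the contribution to the kernel and cokernel is controlled by $\mathrm{H}^0$ of the local decomposition subgroup acting on $E[p^\infty]$, and its $p$-part is bounded by the local Tamagawa number at $w$; under (Def) only finitely many such places contribute, and under (Ram) all the relevant $p$-parts are trivial, so these terms vanish. At a prime $v$ of $K_n$ above $p$ the ordinary local condition reduces the analysis to the one-dimensional unramified quotient $E[p^\infty]/F^+E[p^\infty]$, on which Frobenius acts by the unit root $\alpha$; under (Na), $\alpha \not\equiv 1 \pmod{p}$, so $\mathrm{Frob}_v - 1$ is invertible modulo $p$, the relevant $\mathrm{H}^0$ is finite with order bounded independently of $n$, and it in fact vanishes after passing to $\Gamma_n$-coinvariants. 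Assembling these gives a finite cokernel with bound independent of $n$ (the first claim); adding (Ram) kills the remaining bad-prime contributions and yields the isomorphism.

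The main obstacle I expect is the $n$-uniform boundedness at the two primes of $K_n$ above $p$, both of which become totally ramified in $K_\infty/K$, since a naive control computation could produce contributions growing with the ramification index $p^n$. The escape is to exploit the fact that the ordinary filtration is one-dimensional and that $\alpha - 1$ is a $p$-adic unit by (Na), so that the local $\mathrm{H}^0$ of the unramified quotient stabilizes as $n$ grows and the cokernel is genuinely bounded.
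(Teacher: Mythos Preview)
Your direct Mazur--Greenberg snake-lemma argument is essentially correct and is more self-contained than what the paper actually does. The paper does not rerun the local computations: it simply cites \cite[Prop.~1.9]{chida-hsieh-main-conj} and then invokes the Selmer-group identifications of Corollaries~\ref{cor:selmer} and~\ref{cor:no-finite} to transfer that control theorem to the classical Selmer group. Those identifications rest on the $\Lambda$-cotorsion and $\mu=0$ input of Theorem~\ref{thm:main-conj}(3), and \emph{that} is where (Def) enters in the paper's route (the bipartite Euler system requires the definite quaternion algebra of discriminant $N^-$). In your direct approach (Def) plays no role at all, and your sentence ``under (Def) only finitely many such places contribute'' is a misattribution: the bad set $\Sigma$ is finite regardless, primes above $N^-$ split completely in $K_\infty/K$ and hence contribute nothing, and primes above $N^+$ are handled by the Tamagawa bound you already cite. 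So your argument actually proves the proposition without (Def), while the paper's shortcut needs (Def) only because its chosen reference chain passes through the Euler-system machinery. One further simplification: under (Na) one has $\alpha-1\in\mathbb{Z}_p^\times$, so $(E[p^\infty]/F^+E[p^\infty])^{G_{K_{\infty,v}}}=0$ and the $p$-local error term vanishes outright; the growth worry in your final paragraph does not arise.
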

\begin{proof}
See \cite[Prop. 1.9]{chida-hsieh-main-conj} with the identifications of Selmer groups in Corollaries \ref{cor:selmer} and \ref{cor:no-finite}.
\end{proof}

\subsection{The proof of Theorem \ref{thm:main}}
By the anticyclotomic main conjecture (Theorem \ref{thm:main-conj}), we have
$$\left( L_p(E/K_\infty) \right) = \mathrm{char}_{\Lambda} \left( \mathrm{Sel}(K_\infty, E[p^\infty])^\vee \right) .$$
By Corollary \ref{cor:no-finite}, the above equality becomes
$$\left( L_p(E/K_\infty) \right) = \mathrm{Fitt}_{\Lambda} \left( \mathrm{Sel}(K_\infty, E[p^\infty])^\vee \right)$$
Under the quotient map $\Lambda \to \Lambda_n = \Lambda / \omega_n$, it becomes
$$\left( \left(  \theta(f_\alpha/K_n) \cdot \iota( \theta(f_\alpha/K_n) ) \right) \right) = \mathrm{Fitt}_{\Lambda_n} \left( \left( \mathrm{Sel}(K_\infty, E[p^\infty]) [\omega_n]\right)^\vee \right)$$
since Fitting ideals are compatible with base change.
By using the functional equation of theta elements (\ref{eqn:f-e}) and the control theorem (Proposition \ref{prop:control}), we have
$$\left(  \theta(f_\alpha/K_n)  \right)^2 = \mathrm{Fitt}_{\Lambda_n} \left(  \mathrm{Sel}(K_n, E[p^\infty])^\vee \right) .$$
Theorem \ref{thm:main} now follows from Lemma \ref{lem:p-stabilization}, and the ideal is principal thanks to the above equality.

\bibliographystyle{amsalpha}
\bibliography{library}

\end{document}